\numberwithin{equation}{theorem}
\newcommand{\piet}{\pi_1^{\textnormal{\'{e}t}}} 
\theoremstyle{theorem}
\def\todo#1{\textcolor{Mahogany}%
{\footnotesize\newline{\color{Mahogany}\fbox{\parbox{\textwidth-15pt}{\textbf{todo:
} #1}}}\newline}}
\renewcommand{\O}{\mathscr O}
\begin{document}
\title{\'Etale fundamental groups of strongly $F$-regular schemes}
\author[B.~Bhatt]{Bhargav Bhatt}
\author[J.~Carvajal-Rojas]{Javier Carvajal-Rojas}
\author[P.~Graf]{Patrick Graf}
\author[K.~Schwede]{Karl Schwede}
\author[K.~Tucker]{Kevin Tucker}
\address{Department of Mathematics\\ University of Michigan\\Ann Arbor\\ MI 48109}
\email{\href{mailto:bhargav.bhatt@gmail.com}{bhargav.bhatt@gmail.com}}
\address{Department of Mathematics\\ University of Utah\\ Salt Lake City\\ UT 84112 \newline\indent
Escuela de Matem\'atica\\ Universidad de Costa Rica\\ San Jos\'e 11501\\ Costa Rica}
\email{\href{mailto:carvajal@math.utah.edu}{carvajal@math.utah.edu}}
\address{Lehrstuhl f\"ur Mathematik I, Universit\"at Bay\-reuth, 95440 Bayreuth, Germany}
\email{\href{mailto:patrick.graf@uni-bayreuth.de}{patrick.graf@uni-bayreuth.de}}
\address{Department of Mathematics\\ University of Utah\\ Salt Lake City\\ UT 84112}
\email{\href{mailto:schwede@math.utah.edu}{schwede@math.utah.edu}}
\address{Department of Mathematics\\ University of Illinois at Chicago\\Chicago\\ IL 60607}
\email{\href{mailto:kftucker@uic.edu}{kftucker@uic.edu}}

\thanks{Bhatt was supported in part by the NSF Grant DMS \#1501461 and a Packard fellowship}
\thanks{Carvajal-Rojas was supported in part by the NSF FRG Grant DMS \#1265261/1501115}
\thanks{Graf was supported in part by the DFG grant ``Zur Positivit\"at in der komplexen
Geometrie''}
\thanks{Schwede was supported in part by the NSF FRG Grant DMS \#1265261/1501115 and NSF CAREER Grant DMS \#1252860/1501102}
\thanks{Tucker was supported in part by NSF Grant DMS \#1602070 and a fellowship from the Sloan foundation.}

\subjclass[2010]{14F35, 14F20, 14E20, 14F18, 14B05, 13A35, 13B40}


\begin{abstract}
We prove that a strongly $F$-regular scheme $X$ admits a finite, generically Galois, and \'etale-in-codimension-one  cover $\tld X \to X$ such that the \'etale fundamental groups of $\tld X$ and $\tld X_{\reg}$ agree.
Equivalently, every finite \'etale cover of $\tld X_{\reg}$ extends to a finite \'etale cover of $\tld X$.
This is analogous to a result for complex klt varieties by Greb, Kebekus and Peternell.
\end{abstract}

\maketitle

\section{Introduction}

In \cite{XuFinitenessOfFundGroups}, Chenyang Xu proved that the algebraic local fundamental group (the profinite completion of the topological fundamental group of the link) of a complex klt singularity is finite.
This result was then used by Greb--Kebekus--Peternell \cite{GrebKebekusPeternellEtaleFundamental} to show that for a complex quasi-projective variety $X$ with klt singularities, any sequence of finite quasi-\'etale (\ie~\'etale in codimension one) and generically Galois covers eventually becomes \etale.
In particular, for any quasi-projective klt variety $X/\C$ there exists a finite quasi-\'etale generically Galois cover $\rho\colon \tld X \to X$ such that any further quasi-\'etale cover $Y \to \tld X$ is actually \'etale.

Inspired by the relation between klt singularities and strongly $F$-regular singularities in characteristic $p > 0$ \cite{HaraWatanabeFRegFPure}, a subset of the authors showed that if $(R, \fram)$ is a strictly Henselian strongly $F$-regular local ring and $U \subseteq \Spec R$ is the regular locus, then $\pi_1^{\et}(U)$ is finite \cite{CarvajalSchwedeTuckerEtaleFundFsignature}, a variant of Xu's result in characteristic $p > 0$.  In this paper we use this result, a theorem of Gabber \cite{TravauxdeGabber}, and recent work on the non-local behavior of $F$-signature \cite{deStefaniPolstraYao-Globalizing}, to prove a variant of the result of Greb--Kebekus--Peternell in characteristic $p > 0$.

Philosophically, both these results are studying the obstructions of extending finite \'etale covers of the regular locus $X_{\reg}$ of a scheme $X$ to the whole scheme.  The expected general answer is that for more severe singularities on $X$, there are more obstructions.
Analogous to \cite{GrebKebekusPeternellEtaleFundamental}, we show that schemes with strongly $F$-regular singularities are mild in this sense.

\begin{mainthm*}[\textnormal{\autoref{thm.QuasiEtaleStabilize}, \autoref{cor.MaximalQuasiEtale}}]
Suppose $X$ is an $F$-finite Noetherian integral strongly $F$-regular scheme.
Suppose we are given a sequence of finite surjective quasi-\'etale (\'etale in codimension one) morphisms of normal integral schemes
\[ X = X_0 \xleftarrow{\;\;\gamma_1\;\;} X_1 \xleftarrow{\;\;\gamma_2\;\;} X_2 \xleftarrow{\;\;\gamma_3\;\;} \cdots \]
such that each $X_i/X$ is generically Galois.
Then all but finitely many of the $\gamma_i$ are \'etale.

In particular, there exists a finite quasi-\'etale generically Galois cover $\tld X \to X$ so that any further finite quasi-\'etale cover $Y \to \tld X$ is actually \'etale.
Equivalently, the map $\piet(\tld X_{\reg}) \to \piet(\tld X)$ induced by the inclusion of the regular locus is an isomorphism.
\end{mainthm*}

Note that we do not require any quasi-projectivity hypothesis.
We actually obtain a slightly stronger version, just as in \cite{GrebKebekusPeternellEtaleFundamental}.
As a corollary, we also obtain a variant of \cite[Theorem 1.10]{GrebKebekusPeternellEtaleFundamental}, see \autoref{cor.SimultaneousIndex1Cover}, which says in particular that there exists an integer $N > 0$ such that for every $\bQ$-Cartier divisor $D$ on $X$ with index not divisible by $p$, $N \cdot D$ is in fact Cartier.
Finally, we observe that the map $\tld X \to X$ from our Main~Theorem above is tame wherever it is not \'etale, see \autoref{cor.MaximalTameCover}.

\subsection*{Acknowledgements}

The authors began working on this project while visiting CIRM Luminy in September 2016.

\section{Notation and conventions}

\begin{convention}
All rings and schemes considered will be Noetherian.  We will typically also be working with normal connected and hence integral schemes.
\end{convention}



\begin{notation}
Let $\rho\colon Y \to X$ be a morphism of normal schemes.
\begin{enumerate}
\item
We say that $\rho$ is \emph{quasi-\'etale} if it is quasi-finite and there exists a closed subset $Z \subseteq Y$ of codimension $\ge 2$ such that $\rho|_{Y \setminus Z}\colon Y \setminus Z \to X$ is \'etale.
Notice that if $\rho$ is further finite, then $Y$ coincides with the integral closure of $X$ in $K(Y)$.
\item
\label{not.b.branchlocus}
Assume that $\rho$ is finite and surjective. Let $W \subseteq X$ be the largest Zariski-open subset such that $\rho^{-1}(W) \to W$ is \'etale.
The \emph{branch locus} of $\rho$ is defined to be $X \setminus W$.
It is worth to remark that by Purity of the Branch Locus, if $\rho$ is quasi-\'etale then its branch locus is contained in the non-regular locus of $X$.
\item
If $X$ and $Y$ are connected, we say that $\rho$ finite and surjective is \emph{generically Galois} if the fraction fields extension $K(Y)/K(X)$ is Galois.
In this case, it follows that $\rho^{-1}(W) \to W$ is Galois, where $W$ is as in \autoref{not.b.branchlocus}.
\item
By a \emph{geometric point} $\bar x \in X$ of a scheme $X$ we mean a morphism $\Spec L \to X$, where $L$ is a separably closed field.
Note that $\bar x \to X$ factors through $\Spec k(x) \to X$, where $x \in X$ is the scheme-theoretic image point.
\item
Suppose that $\bar s, \bar t \in X$ are geometric points of a scheme $X$.
A \emph{specialization of geometric points $\bar s \rightsquigarrow \bar t$} is a factorization
\[
\bar s \to \Spec \O_{X, \overline{t}}^{\sh} \to X
\]
where the composition is the map defining the geometric point $\bar s \to X$.
Note that then we get a map $\Spec \O_{X,\overline{s}}^{\sh} \to \Spec \O_{X,\overline{t}}^{\sh}$.
In particular, the image of $\overline{t}$ as a scheme theoretic point on $X$ is contained in the Zariski closure of the image of $\overline{s}$.
Conversely, if $\bar s \in X$ is a geometric point and $t \in \overline{ \{ s \} }$, then we can find a geometric point $\bar t \in X$ with scheme-theoretic image point $t$ such that $\bar s$ specializes to $\bar t$.
\end{enumerate}
\end{notation}

\section{Gabber's constructibility and applications to the fundamental group}

Recall the following theorem of Gabber.

\begin{theorem}\textnormal{\cite[Expos\'e XXI, Th\'eor\`eme 1.1, 1.3]{TravauxdeGabber}}
\label{thm.GabberConstructibility}
Suppose that $X$ is a normal excellent scheme and $Z \subseteq X$ a closed subscheme of codimension $\geq 2$.
Let $j\colon U = X \setminus Z \to X$ be the inclusion.
Then for all finite groups $G$, $R^i j_* (G_U)$ is constructible for $i = 0, 1$.
Here $G_U$ denotes the constant sheaf $G$ on $U$ equipped with the \'etale topology.
\end{theorem}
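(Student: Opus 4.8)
The plan is to reduce the global constructibility statement to two purely local questions about the \emph{punctured strictly local schemes} of $X$, dispose of the case $i = 0$ by hand, and attack the genuinely hard case $i = 1$ by combining absolute Noetherian approximation with Gabber's local uniformization.

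\emph{Reduction to stalks.} Recall that an \'etale sheaf of (pointed) sets on the Noetherian scheme $X$ is constructible precisely when there is a finite partition of $X$ into locally closed subschemes on whose members the sheaf restricts to a finite locally constant sheaf; by Noetherian induction it therefore suffices to produce, for each integral closed $T \subseteq X$, a dense open $V \subseteq T$ over which $R^i j_*(G_U)$ is finite locally constant. Since $j$ is a quasi-compact open immersion and $G$ is finite, cohomology commutes with the limit defining the strict henselization, so the stalk of $R^i j_*(G_U)$ at a geometric point $\bar x$ is $H^i_{\et}(U_{\bar x}, G)$, where $U_{\bar x} := U \times_X \Spec \O_{X,\bar x}^{\sh}$; the comparison of stalks along a specialization $\bar s \rightsquigarrow \bar t$ is induced by the map $U_{\bar s} \to U_{\bar t}$. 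Thus the theorem reduces to showing (a) each $H^i_{\et}(U_{\bar x},G)$ is finite, and (b) the associated specialization maps are bijective over the strata of a suitable finite partition.

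\emph{The case $i=0$.} Here $H^0_{\et}(U_{\bar x},G) = G^{\pi_0(U_{\bar x})}$, so it is enough to see that $U_{\bar x}$ is connected. As $X$ is normal, $\O_{X,\bar x}^{\sh}$ is a normal local ring, hence a domain; thus $\Spec \O_{X,\bar x}^{\sh}$ is irreducible and its nonempty open subscheme $U_{\bar x}$ is irreducible, in particular connected. Therefore $H^0_{\et}(U_{\bar x}, G) = G$ for every $\bar x$, all specialization maps are the identity, and $R^0 j_*(G_U) = G_X$ is constant, hence constructible.

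\emph{The case $i = 1$, and the main obstacle.} Now $H^1_{\et}(U_{\bar x}, G)$ is the pointed set of $G$-torsors on the connected scheme $U_{\bar x}$, that is, the set of conjugacy classes of continuous homomorphisms $\piet(U_{\bar x}) \to G$. Its finiteness for all finite $G$ is equivalent to topological finite generation of the local fundamental group $\piet(U_{\bar x})$, and proving this --- uniformly enough in $\bar x$ to also yield (b) --- is the heart of the argument. I would proceed as follows: first use absolute Noetherian approximation to descend to the case where $X$ is of finite type over $\Z$ or over a field; then invoke Gabber's local uniformization (the alteration theorems) to arrange, after a generically finite alteration over which one can track torsors, that the complement $Z$ is contained in a strict normal crossings divisor. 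In that model the tame quotient of $\piet(U_{\bar x})$ is an explicit finitely generated (abelian-by-finite) group and the wild part is controlled, giving finiteness of the torsor sets; feeding this into the specialization theory for $\piet$ in families then produces the finite stratification of (b). The principal difficulty is precisely the control of $\piet(U_{\bar x})$ in the presence of wild ramification in characteristic $p$ and on mixed-characteristic fibers, together with the requirement that this control be \emph{constructible} in $\bar x$; it is exactly this that forces the use of Gabber's uniformization in place of a naive appeal to resolution of singularities.
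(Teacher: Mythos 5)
First, a point of orientation: the paper does not prove this statement at all --- it is quoted as a black box from Gabber's work (Expos\'e XXI of \emph{Travaux de Gabber}, Ast\'erisque 363--364), where the proof occupies essentially an entire expos\'e. So there is no argument in the paper to compare you against; you are attempting to reprove a deep theorem from scratch. Within your attempt, the reduction to stalks and the case $i=0$ are correct and standard: since $j$ is quasi-compact, the stalk of $R^i j_*(G_U)$ at a geometric point $\bar x$ is $H^i_{\et}(U_{\bar x}, G)$ with $U_{\bar x} = U \times_X \Spec \O_{X,\bar x}^{\sh}$, and normality makes $\Spec \O_{X,\bar x}^{\sh}$ irreducible, so $U_{\bar x}$ is connected and $R^0 j_*(G_U)$ is the constant sheaf $G_X$.

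For $i=1$, however --- which is the entire content of the theorem --- what you offer is a research program, and both of its pivotal steps fail as stated. (1) \emph{Noetherian approximation.} Writing $X = \varprojlim X_\lambda$ with $X_\lambda$ of finite type over $\Z$, what you would need is that $R^1 j_*(G_U)$ is the pullback of $R^1 j_{\lambda *}(G_{U_\lambda})$ at some finite level $\lambda$; what the limit theorems actually give is that it is the \emph{filtered colimit} of these pullbacks, and a filtered colimit of constructible sheaves need not be constructible --- producing stabilization of this system is essentially the theorem itself. Moreover, if such a reduction worked, the excellence hypothesis would become vacuous (finite type $\Z$-schemes are automatically excellent, and your subsequent argument never uses excellence of $X$ itself). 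That is a red flag: (quasi-)excellence enters Gabber's proof essentially, via Artin--Popescu approximation and local uniformization applied to the strictly henselian local rings $\O_{X,\bar x}^{\sh}$, objects which are not visible in any finite-type model of $X$. (2) \emph{The alteration step.} There is no mechanism by which torsors are ``tracked'' along a generically finite alteration: $H^1(-,G)$ admits no trace or descent along proper generically finite maps for non-abelian $G$, and even for abelian $G$ the issue is wild ramification. Passing to a model where $Z$ sits in a strict normal crossings divisor controls, via Abhyankar's lemma, only the \emph{tame} quotient of $\pi_1^{\et}(U_{\bar x})$; when $p$ divides $|G|$, the wild part is exactly what must be bounded, and your sentence ``the wild part is controlled'' defers precisely this point without an argument. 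That deferred step is not a technical detail; it is the theorem. (A minor side remark: finiteness of $H^1(U_{\bar x},G)$ for all finite $G$ is equivalent to $\pi_1^{\et}(U_{\bar x})$ having only finitely many open subgroups of each index, which is weaker than topological finite generation.)
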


We use this to obtain:

\begin{proposition} \label{prop.GabberStratification}
Let $i\colon Z \hookrightarrow X$ be a closed subscheme of codimension $\ge 2$ in a normal excellent scheme.
Let $j\colon U = X \setminus Z \hookrightarrow X$ be the complement.
Suppose that
\[ \big| \pi_1^{\et}(\Spec \O_{X,x}^{\sh} \setminus {Z}^{\sh}) \big| \]
is bounded for all geometric points $x \in X$ (here $Z^{\sh} = Z_{X,x}^{\sh}$ is the inverse image of $Z$).
Then there exists a finite stratification $\{ Z_i \}_{i \in I}$ of $X$ into locally closed subschemes $Z_i$ with the following property:
\begin{itemize}
\item[] Fix a specialization $s \rightsquigarrow t$ of geometric points of $Z_i$. 
Consider the map
    \begin{equation}
    \label{eq.MapOfHenselizations}
     \Spec(\O^{\sh}_{X,s}) \longrightarrow \Spec(\O^{\sh}_{X,t}).
    \end{equation}
    Remove the inverse images of $Z$ to obtain
    \begin{equation}
    \label{eq.TubularMapOfHenselizations}
    T_s \xrightarrow{\;\;\alpha\;\;} T_t.
    \end{equation}
    Then applying $\piet(-)$ to \autoref{eq.TubularMapOfHenselizations} yields an isomorphism.
\end{itemize}
\end{proposition}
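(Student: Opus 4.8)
The plan is to reduce the statement to the constructibility of the sheaves $R^1 j_* G_U$ supplied by \autoref{thm.GabberConstructibility}, translated through the standard computation of the stalks of higher direct images. For a geometric point $\bar x$ and a finite group $G$ one has
\[
\big(R^i j_* G_U\big)_{\bar x} \;\cong\; H^i_{\et}\big(\Spec \O_{X,\bar x}^{\sh} \setminus Z^{\sh},\, G\big) \;=\; H^i_{\et}(T_x, G),
\]
and the morphism $\alpha\colon T_s \to T_t$ of \autoref{eq.TubularMapOfHenselizations}, induced by the map of strict henselizations, realizes the specialization map $(R^i j_* G_U)_{\bar t} \to (R^i j_* G_U)_{\bar s}$ as the pullback $\alpha^*\colon H^i(T_t, G) \to H^i(T_s, G)$. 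Since $X$ is normal, each $\O_{X,\bar x}^{\sh}$ is a strictly henselian normal domain, so removing the codimension-$\ge 2$ set $Z^{\sh}$ leaves $T_x$ connected; thus $H^0(T_x, G) = G$ is constant and the content lies entirely in degree $i = 1$, where $H^1(T_x, G) = \Hom_{\cont}\!\big(\piet(T_x), G\big)/G$ (conjugacy classes of continuous homomorphisms).

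Next I would build the stratification. The boundedness hypothesis gives $\big|\piet(T_x)\big| \le N$ for all $x$, so each $\piet(T_x)$ is a finite group of order at most $N$; in particular only finitely many isomorphism types occur. Let $\mathcal G$ be the (finite) set of isomorphism classes of finite groups of order $\le N$. For each $G \in \mathcal G$, \autoref{thm.GabberConstructibility} makes $R^1 j_* G_U$ constructible, hence locally constant on the strata of some finite stratification of $X$; taking the common refinement over the finitely many $G \in \mathcal G$ yields a single finite stratification $\{Z_i\}_{i \in I}$ on each stratum of which every $R^1 j_* G_U$ $(G \in \mathcal G)$ is locally constant constructible. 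For a locally constant sheaf the specialization maps are isomorphisms, so for any specialization $s \rightsquigarrow t$ of geometric points inside a fixed $Z_i$ the map $\alpha^*\colon H^1(T_t, G) \to H^1(T_s, G)$ is bijective for every $G \in \mathcal G$.

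It remains to upgrade these cohomological bijections to an isomorphism of fundamental groups. Writing $P = \piet(T_s)$, $Q = \piet(T_t)$ and $\phi = \alpha_*\colon P \to Q$, the previous step says precisely that
\[
\phi^*\colon \Hom(Q, G)/{\sim}\;\longrightarrow\;\Hom(P, G)/{\sim}
\]
is bijective for every $G \in \mathcal G$, where $\sim$ denotes $G$-conjugacy. The key group-theoretic lemma is that this forces $\phi$ to be an isomorphism, and it suffices to test on $G = P$ and $G = Q$, both of order $\le N$. Indeed, applying surjectivity of $\phi^*$ with $G = P$ to $[\id_P]$ produces $\psi\colon Q \to P$ with $\psi\circ\phi$ an inner automorphism of $P$; hence $\phi$ is injective, and after adjusting $\psi$ by an inner automorphism we may assume $\psi\circ\phi = \id_P$. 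Then $e := \phi\circ\psi\colon Q \to Q$ is idempotent with image $\phi(P)$, and
\[
\phi^*[e] = [\,e\circ\phi\,] = [\phi] = [\,\id_Q\circ\phi\,] = \phi^*[\id_Q].
\]
Injectivity of $\phi^*$ with $G = Q$ gives $[e] = [\id_Q]$, so $e$ is an inner automorphism of $Q$, hence surjective; therefore $\phi(P) = \im e = Q$ and $\phi$ is an isomorphism.

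The main obstacle is exactly this last passage: because $H^1_{\et}(T_x, G)$ classifies $G$-torsors only up to isomorphism, the data at hand is bijections of homomorphism sets \emph{modulo conjugacy} rather than of homomorphism sets themselves, and one must check that the idempotent/retraction argument survives this quotient (which it does, as above, using only $G \in \{P, Q\}$). The boundedness hypothesis plays a double role here: it is what guarantees finitely many isomorphism types of $\piet(T_x)$, so that the needed coefficient groups all lie in the finite set $\mathcal G$ and a single finite stratification can be extracted from Gabber's theorem. A minor technical point to verify along the way is that the specialization map of $R^1 j_* G_U$ between points of $Z_i$ agrees with that of its restriction to $Z_i$, so that local constancy on the stratum indeed yields invertibility of $\alpha^*$; this is the standard compatibility of specialization maps with the locally closed immersion $Z_i \hookrightarrow X$.
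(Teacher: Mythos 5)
Your proposal is correct and its skeleton is the same as the paper's: invoke Gabber's constructibility for the finitely many relevant coefficient groups, refine to a single stratification on which each $R^1 j_* G$ is locally constant, identify the stalks with $H^1(T_x, G) \cong \Hom_{\cont}\bigl(\piet(T_x), G\bigr)/G$, and then run a purely group-theoretic argument to upgrade bijections on conjugacy classes of homomorphisms to an isomorphism of fundamental groups. The one genuine divergence is that group-theoretic endgame. The paper, writing $G_s = \piet(T_s)$ and $G_t = \piet(T_t)$, uses surjectivity at coefficients $G_s$ to produce $\kappa\colon G_t \to G_s$ with $\kappa \circ \piet(\alpha)$ inner (so $\piet(\alpha)$ is injective), then applies $\Hom(-, G_t)/G_t$ to this composition to deduce that the map induced by $\kappa$ is bijective, and finally repeats the surjectivity argument to get a third homomorphism $\lambda$ with $\lambda \circ \kappa$ inner, forcing $\kappa$ and hence $\piet(\alpha)$ to be isomorphisms. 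You instead normalize the retraction so that $\psi \circ \phi = \id_P$ on the nose, form the idempotent $e = \phi \circ \psi$, and use \emph{injectivity} of $\phi^*$ with coefficients $Q$ to conclude $[e] = [\id_Q]$, hence $e$ is inner, hence surjective, hence $\phi$ is onto. Your version is slightly more economical: it needs only surjectivity of $\phi^*$ at $G = P$ and injectivity at $G = Q$, and avoids both the intermediate transport of bijectivity through $\kappa$ and the construction of $\lambda$; the normalization step (replacing $\psi$ by its composite with an inner automorphism) is legitimate, so the idempotent computation goes through. Both arguments are complete; the difference is a matter of taste in how one exploits the conjugacy-class bijections.
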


\begin{proof}
Note there are only finitely many finite groups $G_x = \pi_1^{\et}(\Spec \O_{X,x}^{\sh} \setminus Z^{\sh})$ for geometric points $x \in X$, since there are only finitely many groups of size less than a given number.
Letting $G_x$ also denote the constant sheaf on $U_{\et}$, since $R^1 j_* G_x$ is constructible by \autoref{thm.GabberConstructibility}, we may choose a stratification $\{ Z_i \}$ of locally closed irreducible subschemes of $X$ so that $R^1 j_* G_x$ is locally constant on each $Z_i$ for all geometric points $x$. Note $R^1 j_* G_x$ is zero over $U$ so the induced stratification has the same data as a stratification of $Z$.

Using our stratification, we know that for any geometric point $x \in X$, the stalks of $R^1 j_* G_x$ at $s$ and $t$ are the same.
These stalks however are $H^1(T_s, G_x)$ and $H^1(T_t, G_x)$, respectively \cite[Chapter III, Theorem 1.15]{MilneEtaleCohomology}.
In other words, the functor of isomorphism classes of $G_x$-torsors over \autoref{eq.TubularMapOfHenselizations} produces isomorphisms.
But we have functorial bijections
\[ H^1(T_s, G_x) \cong \factor \Hom_{\cont} \big( \pi_1^{\et}(T_s), G_x \big). G_x. \]
and
\[ H^1(T_t, G_x) \cong \factor \Hom_{\cont} \big( \pi_1^{\et}(T_t), G_x \big). G_x., \]
where $G_x$ acts on $\Hom_{\cont}(-, G_x)$ by conjugation.
For more discussion see for instance \cite[Section 11.5]{GortzWedhornAlgebraicGeoemtryI}, \cite[Example 11.3]{MilneLecturesOnEtaleCohomology} and \cite[Chapter I, Remark 5.4 and Chapter III, Corollary 4.7, Remark 4.8]{MilneEtaleCohomology}.
In particular, the map
\begin{equation} \label{eq.alpha}
\factor \Hom \big( G_t \cong \piet(T_t), G_x \big). G_x. \longrightarrow \factor \Hom \big( G_s = \piet(T_s), G_x \big). G_x.
\end{equation}
induced by $\piet(\alpha)$ is a bijection for any geometric point $x \in X$.

Recall that we want to prove that $\piet(\alpha)$ is an isomorphism.
We apply~\autoref{eq.alpha} with $x = s$.
Considering the identity $\pi_1^{\et}(T_s) \to G_s$ as an element of the right-hand side, observe that there exists a homomorphism $\kappa\colon G_t \to G_s$, unique up to conjugacy, so that
\begin{equation} \label{eq.kappa}
G_s \xrightarrow{\piet(\alpha)} G_t \xrightarrow{\;\;\kappa\;\;} G_s
\end{equation}
is equal to a conjugate of the identity, that is, an inner automorphism.
In particular, $\kappa$ is surjective and $\pi_1^{\et}(\alpha)$ is injective.
We now apply $\Hom(-, G_t) / G_t$ to~\autoref{eq.kappa}. This gives us
\[ \factor \Hom(G_s, G_t). G_t. \longrightarrow \factor \Hom(G_t, G_t). G_t. \longrightarrow \factor \Hom(G_s, G_t). G_t.. \]
The second map, which is induced from $\piet(\alpha)$, is a bijection by~\autoref{eq.alpha} applied with $x = t$.
The composition is also a bijection, by construction.
Hence the first map, which is induced by $\kappa$, is bijective too.
By the same argument as above, this implies that there exists a homomorphism $\lambda\colon G_s \to G_t$ such that the composition $G_t \xrightarrow{\kappa} G_s \xrightarrow{\lambda} G_t$ is an inner automorphism.
In particular, $\kappa$ is injective and hence an isomorphism.
By~\autoref{eq.kappa}, it follows that $\piet(\alpha)$ is likewise an isomorphism, which was to be shown.
\end{proof}

\begin{remark} \label{rem.StratificationBehavesWellUnderPullback}
The crucial property of the stratification $\{ Z_i \}$ in \autoref{prop.GabberStratification} is preserved by finite quasi-\'etale covers:

Suppose that $\rho\colon X' \to X$ is a separated quasi-finite cover with branch locus contained in $Z$.
By Zariski's Main Theorem, we may assume that $\rho$ is finite since the statement is obvious for open inclusions.
Consider a specialization $s' \rightsquigarrow t'$ of geometric points of $\rho^{-1}(Z_i)$ mapping to a specialization of geometric points of $Z_i \subseteq X$, $s \rightsquigarrow t$, under $\rho$.
The diagram
\begin{equation} \label{fibre product}
\xymatrix{
\Spec \O_{X',s'}^{\sh} \ar[r] \ar[d]_-{\rho_s} & \Spec \O_{X',t'}^{\sh} \ar[d]^-{\rho_t} \\
\Spec \O_{X,s}^{\sh} \ar[r] & \Spec \O_{X,t}^{\sh}
}
\end{equation}
is a fibre product diagram up to taking a connected component (in the end it will follow that there can be at most one connected component from our assumptions).

Denote $T_{t'} := \Spec \O_{X', t'}^{\sh} \setminus \text{inverse image of $\rho^{-1}(Z)$}$ and likewise with $T_{s'}$.
Then removing the preimages of $Z$ from \eqref{fibre product} yields a fibre product diagram (up to taking a connected component, for now)
\[ \xymatrix{
T_{s'} \ar[rr]^-{\alpha'} \ar[d]_-{\rho_s} & & T_{t'} \ar[d]^-{\rho_t} \\
T_s \ar[rr]^-\alpha & & T_t.
} \]
The vertical maps are \'etale since the branch locus of $\rho$ is contained in $Z$, so the induced maps on $\pi_1^{\et}$ are injective.
Since $\piet(\alpha)$ is an isomorphism, we have that $\piet(T_{s'})$ and $\piet(T_{t'})$ define the same subgroups in $\piet(T_s) \cong \piet(T_t)$ (up to conjugation).
In particular, $\piet(\alpha')$ is also an isomorphism and so $\{ \rho^{-1}(Z_i) \}_{i \in I}$ stratifies $X'$ in the sense of \autoref{prop.GabberStratification}.
%
\end{remark}


\begin{proposition} \label{prop.StrataAndCovers}
Suppose that $Z, U \subseteq X$ and let $\{ Z_i \}_{i \in I}$ be such a stratification as in \autoref{prop.GabberStratification} made up of connected $Z_i$.
Let $\rho\colon Y \to X$ be a finite surjective map whose branch locus is contained in $Z$ (in particular $\rho$ is quasi-\'etale), and let $W \subseteq X$ be the maximal Zariski-open subset over which $\rho$ is \'etale.
Then, for every $i \in I$, either $W \cap Z_i = \emptyset$ or $Z_i \subseteq W$.
Equivalently, either the branch locus of $\rho$ contains $Z_i$ or it is disjoint from $Z_i$.
\end{proposition}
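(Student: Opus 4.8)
The plan is to reduce membership in $W$ to a statement about the covering data along the tubular neighborhoods $T_x$, and then to transport that data along specializations using the isomorphisms provided by \autoref{prop.GabberStratification}. Fix a stratum $Z_i$. If $Z_i \subseteq U$, then $\rho$ is \'etale over $Z_i$ and $Z_i \subseteq W$ trivially, so I may assume $Z_i \subseteq Z$; then every $x \in Z_i$ satisfies $\codim_X \overline{\{x\}} \geq 2$, so $\dim \O_{X,x} \geq 2$ and each $T_x$ is connected. The heart of the argument is a \emph{pointwise criterion}: for a geometric point $x$ of $Z_i$, one has $x \in W$ if and only if the finite \'etale cover $\rho^{-1}(T_x) \to T_x$ is trivial, equivalently $\piet(T_x)$ acts trivially on its fibre. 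Indeed, $x \in W$ is equivalent to $Y_x := Y \times_X \Spec \O_{X,x}^{\sh} \to \Spec \O_{X,x}^{\sh}$ being \'etale, since the \'etale locus is open and $\Spec \O_{X,x}^{\sh}$ is the limit of \'etale neighborhoods of $x$. As $\O_{X,x}^{\sh}$ is strictly henselian, such a finite cover is \'etale exactly when it is a disjoint union of copies of the base; and because $Y_x$ is normal, finite over $\Spec \O_{X,x}^{\sh}$, and \'etale away from the codimension-$\geq 2$ set $Z^{\sh}$, it is the normalization of $\Spec \O_{X,x}^{\sh}$ in $\rho^{-1}(T_x)$. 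Hence $Y_x$ is trivial precisely when $\rho^{-1}(T_x) \to T_x$ is, using connectedness of $T_x$ and the extension of normal sections across $Z^{\sh}$.

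Next I would show that this criterion is constant along specializations inside $Z_i$. Let $s \rightsquigarrow t$ be a specialization of geometric points of $Z_i$ and let $\alpha\colon T_s \to T_t$ be the map of \autoref{eq.TubularMapOfHenselizations}. Base-changing $\rho$ along the specialization map $\Spec \O_{X,s}^{\sh} \to \Spec \O_{X,t}^{\sh}$ and deleting the preimages of $Z$ exhibits $\rho^{-1}(T_s) \to T_s$ as the pullback along $\alpha$ of $\rho^{-1}(T_t) \to T_t$. Since $\piet(\alpha)$ is an isomorphism by \autoref{prop.GabberStratification}, pullback along $\alpha$ is an equivalence between the categories of finite \'etale covers of $T_t$ and of $T_s$ that carries trivial covers to trivial covers. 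Combined with the pointwise criterion, this yields $s \in W \iff t \in W$.

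Finally I would promote this to constancy on the connected stratum $Z_i$. For each irreducible component $C$ of $Z_i$, with generic point $\eta$, every $x \in C = \overline{\{\eta\}}$ is the target of a specialization of geometric points $\bar\eta \rightsquigarrow \bar x$ (by the construction of specializations recalled in the Notation section), all taking place inside $Z_i$; hence $x \in W \iff \eta \in W$, so the truth value of ``$x \in W$'' is constant on $C$. Because $Z_i$ is connected and has only finitely many irreducible components, these components can be chained through common points, at which the constant values must coincide; thus ``$x \in W$'' is constant on all of $Z_i$. Therefore either $Z_i \subseteq W$ or $Z_i \cap W = \emptyset$, which is exactly the asserted dichotomy once one recalls that the branch locus is $X \setminus W$.

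I expect the pointwise criterion in the first paragraph to be the main obstacle. The delicate points are that $W$ is genuinely detected on the strict henselization, and that triviality of the cover over the punctured neighborhood $T_x$ propagates to triviality over the entire $\Spec \O_{X,x}^{\sh}$ — this is where normality of $Y$, the codimension bound on $Z$, and connectedness of $T_x$ all enter, and where one must ensure the identification $Y_x = \text{(normalization in }\rho^{-1}(T_x))$ is compatible with the pullback along $\alpha$ used in the specialization step.
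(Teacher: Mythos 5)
Your proposal is correct and follows essentially the same route as the paper's proof: the same pointwise criterion (this is exactly \autoref{clm.EtaleIffLocallyTrivial}, proved the same way via descent to the strict henselization, triviality of finite \'etale covers of strictly henselian bases, and normality plus the codimension-$\ge 2$ bound for the converse), followed by transporting triviality along specializations within a stratum using the $\piet$-isomorphism supplied by \autoref{prop.GabberStratification}. The only differences are cosmetic: you transport triviality directly through the equivalence of finite \'etale covers induced by $\piet(\alpha)$, whereas the paper routes through the monodromy action of $\piet(W)$ on the fibre of $\rho^{-1}(W) \to W$ using the common image of $\piet(T_s)$ and $\piet(T_t)$ in $\piet(W)$; and you explicitly chain irreducible components to handle connected but reducible strata, a point the paper leaves implicit by working with a generic geometric point.
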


\begin{proof}
We have the following criterion for geometric points of $X$ to belong to $W$.

\begin{claim} \label{clm.EtaleIffLocallyTrivial}
A geometric point $t \in X$ is contained in $W$ ($\rho$ is \etale{} over $t$) if and only if the pullback of $\rho\colon Y \to X$ to $T_t := \Spec \O_{X, t}^{\sh}\setminus Z^{\sh}$ is trivial (a finite disjoint union of copies of $T_t$).
\end{claim}

\begin{proof}[Proof of \autoref{clm.EtaleIffLocallyTrivial}]
Since the condition of being \'etale is Zariski-local, we see that $t \in W$ if and only if $\rho_{t}\colon Y_t = Y \times_{X} \Spec \O_{X,t} \to \Spec \O_{X,t}$ is \'etale (here we identify $t$ with its image at $X$ to avoid cumbersome phrasing and notation). However, by faithfully flat descent~\cite[Tag 02YJ, Lemma 34.20.29]{stacks-project} this morphism is \'etale if and only if its pullback to $\Spec \O_{X,t}^{\sh}$, say $\rho_t^{\sh}\colon Y_t^{\sh} \to \Spec \O_{X,t}^{\sh}$, is \'etale. But the target being the spectrum of a strictly henselian local ring, the latter condition implies that the finite \etale{} cover $\rho_t^{\sh}$ is trivial (\ie a finite disjoint union of copies of $\Spec \O_{X,t}^{\sh}$).
In particular, if $t \in W$ then the pullback $\rho^{-1}(T_t) \to T_t$ is trivial.

Conversely if $\rho^{-1}(T_t) \to T_t$ is trivial, then so is $\rho_t^{\sh}$ since\footnote{Note finite maps between normal schemes are determined outside a set of codimension 2 \cite{HartshorneGeneralizedDivisorsOnGorensteinSchemes}.} all the schemes are normal and the complement of $T_t \subseteq \Spec \O_{X,t}^{\sh}$ has codimension $\geq 2$.
As above this implies $t \in W$.
\end{proof}

Now let $t \in Z_i$ be a geometric point and let $s \in Z_i$ be a generic geometric point with a specialization $s \rightsquigarrow t$.
It is sufficient to show that $t \in W$ if and only if $s \in W$.
This follows by recalling that we have a canonical map $T_s \to T_t$ which induces an isomorphism on the level of fundamental groups, and moreover a commutative diagram
\[
\xymatrix@R=12pt{
T_t \ar[rd] \\
T_s \ar[u]\ar[r] & W \subseteq X. \\
}
\]
It follows that the images of $\piet(T_t)$ and of $\piet(T_s)$ coincide in $\piet(W)$ up to conjugation.
By~\cite[Chapter~I, Theorem~5.3]{MilneEtaleCohomology}, the finite \'etale cover $\rho^{-1}(W) \to W$ corresponds to a continuous $\piet(W)$-action on the finite set $Q := F(\rho^{-1}(W))$, where $F$ is the fibre functor.

Now assume $s \in W$, \ie~$\rho$ is \'etale over $s$.
By~\autoref{clm.EtaleIffLocallyTrivial},
the induced action of $\piet(T_s)$ on $Q$ is trivial.
It follows that the induced action of $\piet(T_t)$ on $Q$ is also trivial.
By~\cite[Chapter I, Theorem 5.3]{MilneEtaleCohomology} again, $\rho^{-1}(T_t) \to T_t$, or more precisely $Y \times_X T_t \to T_t$, is trivial, \ie~the disjoint union of $\deg(\rho)$ copies of $T_t$.
Hence by \autoref{clm.EtaleIffLocallyTrivial} again, we get $t \in W$.
Conversely, $t \in W$ implies $s \in W$ by running the argument backwards.
\end{proof}

\section{Maximal quasi-\'etale covers}

As mentioned, our result is somewhat more general than the one in the introduction.

\begin{theorem}\textnormal{(\cf~\cite[Theorem 2.1]{GrebKebekusPeternellEtaleFundamental})} \label{thm.QuasiEtaleStabilize}
Suppose $X$ is an $F$-finite Noetherian integral strongly $F$-regular scheme.
Suppose that we have a commutative diagram of separated quasi-finite maps between normal $F$-finite Noetherian integral schemes
\[
\xymatrix{
    &              Y_1 \ar@{->>}[d]_{\eta_1}  & \ar[l]_{\gamma_1} Y_2 \ar@{->>}[d]_{\eta_2} & \ar[l]_{\gamma_2} Y_3 \ar@{->>}[d]_{\eta_3} & \ar[l]_{\gamma_3} \ldots\\
X   & \ar@{_{(}->}[l]^{j_0}   X_1                  & \ar@{_{(}->}[l]^{j_1}      X_2                 & \ar@{_{(}->}[l]^{j_2}      X_3                 & \ar@{_{(}->}[l]^{j_3} \ldots
}
\]
such that the following conditions hold.
\begin{itemize}
\item[(i)] The maps $j_i$ are inclusions of open sets.
\item[(ii)] The maps $\eta_i$ are finite, quasi-\'etale and generically Galois.
\end{itemize}
Then all but finitely many of the $\gamma_j$ are \'etale.
\end{theorem}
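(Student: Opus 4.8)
The plan is to reduce the global stabilization statement to a finiteness statement about \emph{local monodromy} at the generic points of a single finite stratification of $X$, and then to exploit that each local \'etale fundamental group is finite while the covers in the tower only get finer as we go up.

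First I would set $Z = \Sing X$ and $U = X_{\reg}$, so that $Z$ has codimension $\geq 2$ (as $X$ is normal) and, being $F$-finite, $X$ is excellent; thus \autoref{prop.GabberStratification} applies once its boundedness hypothesis is checked. For a geometric point $x\in X$ the ring $\O_{X,x}^{\sh}$ is strictly Henselian and strongly $F$-regular, and $\Spec \O_{X,x}^{\sh}\setminus Z^{\sh}$ is exactly its regular locus; hence by \cite{CarvajalSchwedeTuckerEtaleFundFsignature} the group $\piet(\Spec \O_{X,x}^{\sh}\setminus Z^{\sh})$ is finite of order at most $1/s(\O_{X,x})$, the reciprocal of the $F$-signature. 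By \cite{deStefaniPolstraYao-Globalizing} the $F$-signature is bounded below by a positive constant uniformly in $x$, yielding a uniform bound $M$ on these orders. \autoref{prop.GabberStratification} then produces a finite stratification $\{Z_a\}_a$ of $X$ into connected locally closed subschemes with the tubular-isomorphism property.

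Next I would reformulate the conclusion stratum by stratum. Over $U_{j+1} := X_{j+1}\cap U$ the map $\gamma_j$ is a morphism of finite \'etale Galois covers (the restrictions of $\eta_j$ and $\eta_{j+1}$), hence \'etale there, so the branch locus of $\gamma_j$ lies in $\eta_{j+1}^{-1}(Z\cap X_{j+1})$, which has codimension $\geq 2$; in particular each $\gamma_j$ is quasi-\'etale. Pulling the stratification back along $\eta_j$ via \autoref{rem.StratificationBehavesWellUnderPullback} and restricting to the open $\eta_j^{-1}(X_{j+1})\subseteq Y_j$ equips the base of $\gamma_j$ with a stratification of the required type, so \autoref{prop.StrataAndCovers} shows $\gamma_j$ is \'etale over an entire stratum or over none of it, and \autoref{clm.EtaleIffLocallyTrivial} lets me detect this at the generic point. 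Thus $\gamma_j$ is \'etale if and only if, for every stratum $Z_a$ meeting $X_{j+1}$, the pullback of $\gamma_j$ over the punctured Henselian neighborhood of a generic geometric point of $\eta_j^{-1}(Z_a)$ is trivial (strata inside $U$ impose no condition, $\gamma_j$ being \'etale over $U$).

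The finiteness argument then runs as follows. Writing $\Gamma_i$ for the Galois group of $\eta_i$ and $q_i\colon\Gamma_{i+1}\twoheadrightarrow\Gamma_i$ for the surjection coming from $K(Y_i)\subseteq K(Y_{i+1})$, I attach to each stratum $Z_a$ meeting $X_i$ its generic geometric point $s_a$ (which then lies in $X_i$, as the $X_i$ are open), and the local monodromy subgroup $\Gamma_{i,a}:=\im\big(G_{s_a}\to\piet(U_i)\to\Gamma_i\big)$ inside the finite group $G_{s_a}=\piet(T_{s_a})$. Tracing the local picture through \autoref{clm.EtaleIffLocallyTrivial} and the fibre-product description of \autoref{rem.StratificationBehavesWellUnderPullback}, triviality of the pulled-back cover over $T_{s_a}$ becomes the equality of kernels $\ker(G_{s_a}\to\Gamma_j)=\ker(G_{s_a}\to\Gamma_{j+1})$, i.e.\ the condition that $q_j$ restrict to an isomorphism $\Gamma_{j+1,a}\xrightarrow{\ \sim\ }\Gamma_{j,a}$. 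Since $q_i(\Gamma_{i+1,a})=\Gamma_{i,a}$, the orders $|\Gamma_{i,a}|$ are non-decreasing in $i$ and bounded above by $|G_{s_a}|\leq M$, hence stabilize past some index $N_a$, beyond which $q_i$ is an isomorphism on the $a$-th local monodromy. As there are only finitely many strata --- this is precisely where Gabber's constructibility (\autoref{thm.GabberConstructibility}) is used --- I may set $N=\max_a N_a$; then for every $j\geq N$ and every stratum meeting $X_{j+1}$ the relevant $q_j$ is an isomorphism, so $\gamma_j$ is \'etale.

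The main obstacle I anticipate is the local computation in the last paragraph: one must carefully verify, using the punctured strictly Henselian neighborhoods, that \'etaleness of $\gamma_j$ at a point over $s_a$ corresponds \emph{exactly} to the stabilization $\ker(G_{s_a}\to\Gamma_j)=\ker(G_{s_a}\to\Gamma_{j+1})$ of local monodromy, and that this criterion is insensitive to the choice of base point (so that it is well-defined up to conjugacy). Once this local dictionary between the geometry of $\gamma_j$ and the subgroups $\Gamma_{j,a}\leq G_{s_a}$ is established, the global conclusion is a soft finiteness argument powered by the uniform bound $M$ from \cite{deStefaniPolstraYao-Globalizing} together with the finiteness of the Gabber stratification.
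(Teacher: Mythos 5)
Your proposal is correct and follows essentially the same route as the paper: uniform $F$-signature bound via \cite{deStefaniPolstraYao-Globalizing} and \cite{CarvajalSchwedeTuckerEtaleFundFsignature} to get finite local fundamental groups, the Gabber stratification of \autoref{prop.GabberStratification}, its pullback via \autoref{rem.StratificationBehavesWellUnderPullback}, and the reduction to triviality at generic points of strata via \autoref{clm.EtaleIffLocallyTrivial} and \autoref{prop.StrataAndCovers}. Your explicit bookkeeping with the local monodromy images $\Gamma_{i,a}$ and the surjections $q_i$ is just a careful unpacking of the paper's statement that the tower pulled back to each $T_{s_i}$ stabilizes because $\piet(T_{s_i})$ is finite and the covers are generically Galois, so the two arguments coincide in substance.
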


To recover the statement in the introduction, simply set all the $X_i = X$.

\begin{proof}
By \cite[Theorem B]{deStefaniPolstraYao-Globalizing} (see also the semi-continuity of Hilbert-Kunz multiplicity and $F$-signature \cite{SmirnovUpperSemicontinuity,PolstraSemicontinuityFsignature,TuckerPolstraUniformApproach}) and the fact that $X$ is quasi-compact, we know there exists a uniform lower bound $\delta > 0$ on $s(\O_{X,x} =: R_x)$ for each scheme-theoretic point $x$ of $X$.
Let $R_x^{\sh}$ denote the strict henselization of the local ring $R_x$.
Since $R_x \subseteq R_x^{\sh}$ is flat with regular closed fiber, we know that $s(R_x^{\sh}) = s(R_x) \ge \delta$ by \cite[Theorem 5.6]{YaoObservationsAboutTheFSignature}.
Also $R_x^{\sh}$ is $F$-finite since quite generally, the strict henselization of a normal $F$-finite local ring is again $F$-finite\footnote{Indeed, if $R$ is an $F$-finite ring and $S$ is a direct limit of \etale{} maps (\ie an ind-\etale{} map), then $R^{1/p} \otimes_R S = S^{1/p}$ so that $S \subseteq S^{1/p}$ is finite.}.
In other words, we see that $s(\O_{X,x}^{\sh}) \geq \delta$ and $\O_{X,x}^{\sh}$ is $F$-finite for every geometric point $x$ of $X$.
By \cite[Theorem A]{CarvajalSchwedeTuckerEtaleFundFsignature}, we thus know that
\[
\big|\pi_1^{\et}(\Spec \O_{X,x}^{\sh} \setminus Z^{\sh})\big| \leq 1/\delta
\]
for each geometric point $x$ of $X$, where $Z$ is the singular locus of $X$.

Construct the stratification $\{ Z_i \}_{i \in I}$ as in \autoref{prop.GabberStratification}, where $Z = X_{\sing}$; assume the $Z_i$ are irreducible.  Observe that the $Z_i \cap X_k$ also stratify the open sets $X_k \subseteq X$ (although we may lose some pieces of the stratification).
Let $s_i$ be a geometric generic point of $Z_i$ and let $T_{s_i} := \Spec \O_{X, s_i}^{\sh} \setminus Z^{\sh}$ be its ``tubular neighborhood''. Pull back the whole sequence $X \leftarrow Y_1 \xleftarrow{\gamma_1} Y_2 \xleftarrow{\gamma_2} \cdots $ to $T_{s_i}$ to obtain
\[ T_{s_i} \xleftarrow{\;\;\gamma_{0, i}\;\;} T_{1, s_i} \xleftarrow{\;\;\gamma_{1, i}\;\;} T_{2, s_i} \xleftarrow{\;\;\gamma_{2, i}\;\;} \cdots. \]
Then all but finitely many of the $\gamma_{k, i}$ become trivial, as $\pi_1^{\et}(T_{s_i})$ is finite (of course, it is possible that $T_{k, s_i}$ becomes empty for $k \gg 0$ if $s_i \notin X_k$).
Note here is where we use the hypothesis that each $\eta_k : Y_k \to X_k \subseteq X$ is generically Galois, in particular Galois over the regular locus $U_k$ of $X_k$.

Now, given that $I$ is finite, one can pick $N \gg 0$ such that $\gamma_{n, s_i}$ is trivial for all $i \in I$ and all $n \geq N$. 
But by \autoref{rem.StratificationBehavesWellUnderPullback}, the inverse images of the generic points $s_i$ of the $Z_i$ in $Y_n$ are exactly the generic points $s_{n,j}$ of a stratification $\{ Z_{n,j} \}_{j \in J_n}$ of $Y_n$ satisfying the conclusion of \autoref{prop.GabberStratification}, with the $Z_{n,j}$ irreducible. Since
\[
\xymatrix{
\gamma_n\colon Y_{n+1} \ar@{->>}[r]^-{\text{finite}} & \eta_{n}^{-1}(X_{n+1}) \ar@{^{(}->}[r]^-{\text{open}} & Y_{n}
}
\]
is trivial after base changing with
\[ T_{s_{n,j}} := \Spec \O_{Y_n, s_{n,j}}^{\sh} \setminus \text{preimage of $\eta_n^{-1}(Z)$} \]
for $n \ge N$ and $j \in J_n$, by \autoref{clm.EtaleIffLocallyTrivial} we see that $\gamma_n$ is \'etale over the geometric generic point of every stratum $Z_{n,j}$ (of course, it might miss some completely).
Now applying \autoref{prop.StrataAndCovers} shows that $\gamma_n$ is finite \'etale over every point of $\eta_n^{-1}(X_{n+1})$, which completes the proof.
\end{proof}

The following corollary follows immediately.

\begin{corollary}\textnormal{(\cf~\cite[Theorem 1.5]{GrebKebekusPeternellEtaleFundamental})} \label{cor.MaximalQuasiEtale}
Suppose that $X$ is an $F$-finite Noetherian integral strongly $F$-regular scheme.
Then there exists a finite quasi-\'etale generically Galois cover $\rho\colon \tld X \to X$ with $\tld X$ normal and which satisfies the following property:

Every finite \'etale cover of the regular locus of $\tld X$ extends to a finite \'etale cover of $\tld X$.
Equivalently, the map $\piet(\tld X_{\reg}) \to \piet(\tld X)$ induced by the inclusion of the regular locus is an isomorphism.
\end{corollary}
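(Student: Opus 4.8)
The plan is to deduce the statement from \autoref{thm.QuasiEtaleStabilize} by a maximality (tower) argument, so the first thing I would do is reformulate the desired conclusion in terms of quasi-\'etale covers. Writing $U = X_{\reg}$, the normalization of $X$ in a finite separable extension of $K(X)$ that is unramified over $U$ produces a finite quasi-\'etale cover of $X$, and conversely every finite quasi-\'etale cover of $X$ restricts to a finite \'etale cover of $U$; by normality and Purity of the Branch Locus these two operations are mutually inverse. Under this dictionary, a finite \'etale cover of $U$ extends to a finite \'etale cover of $X$ \emph{if and only if} the corresponding quasi-\'etale cover of $X$ is in fact \'etale. Hence the sought-after property of $\tld X$ --- that $\piet(\tld X_{\reg}) \to \piet(\tld X)$ be an isomorphism --- is equivalent to the assertion that \emph{every} finite quasi-\'etale cover of $\tld X$ is \'etale. (Surjectivity of $\piet(U)\to\piet(X)$ is automatic since $X$ is normal and integral and $U$ is dense, so only injectivity, i.e.\ the extension property, is at stake.)

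With this reformulation in hand, I would construct $\tld X$ by iterated refinement. Set $\tld X_0 = X$. Given a finite quasi-\'etale generically Galois cover $\tld X_n \to X$, if $\piet((\tld X_n)_{\reg}) \to \piet(\tld X_n)$ is already an isomorphism, stop and put $\tld X = \tld X_n$. Otherwise, by the reformulation there is a finite quasi-\'etale cover $W \to \tld X_n$ that is \emph{not} \'etale; let $\tld X_{n+1} \to X$ be the normalization of $X$ in a Galois closure over $K(X)$ of $K(W)$. Then $\tld X_{n+1} \to X$ is again finite, quasi-\'etale (its restriction over $X_{\reg}$ is the Galois closure of a finite \'etale cover, hence finite \'etale) and generically Galois, and it dominates $W$ and hence $\tld X_n$. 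The transition map $\gamma_n\colon \tld X_{n+1} \to \tld X_n$ is \emph{not} \'etale: it is generically Galois, so if it were finite \'etale over all of $\tld X_n$ it would be a Galois \'etale cover, and then its quotient by $H' := \mathrm{Gal}\big(K(\tld X_{n+1})/K(W)\big)$, namely $W \to \tld X_n$, would be finite \'etale everywhere as well, contradicting the choice of $W$.

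If this process never terminated, it would produce an infinite tower
\[ X = \tld X_0 \xleftarrow{\;\gamma_0\;} \tld X_1 \xleftarrow{\;\gamma_1\;} \tld X_2 \xleftarrow{\;\gamma_2\;} \cdots \]
of finite quasi-\'etale covers, each generically Galois over $X$, in which \emph{no} transition map $\gamma_n$ is \'etale. This directly contradicts \autoref{thm.QuasiEtaleStabilize} (applied with all $X_i = X$), which guarantees that all but finitely many $\gamma_n$ are \'etale. Therefore the refinement stops after finitely many steps, and the resulting $\tld X = \tld X_n$ has the property that every finite quasi-\'etale cover is \'etale --- equivalently that $\piet(\tld X_{\reg}) \to \piet(\tld X)$ is an isomorphism and that every finite \'etale cover of $\tld X_{\reg}$ extends to $\tld X$.

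I expect the main obstacle to be the bookkeeping around the generically-Galois-over-$X$ condition: \autoref{thm.QuasiEtaleStabilize} requires each cover in the tower to be Galois over the fixed base $X$ (not merely over the previous stage), so at each step one must pass to a Galois closure over $K(X)$ and verify both that this closure remains quasi-\'etale (unramified over $U$, which holds because the Galois closure of a finite \'etale cover of $U$ is finite \'etale over $U$, with ramification confined to $X_{\sing}$ by Purity of the Branch Locus) and that the transition map stays non-\'etale. The engine making the latter work is the standard fact that a quotient of a Galois \'etale cover by a subgroup is again \'etale, together with the observation that a connected finite \'etale cover with Galois fraction-field extension is automatically a Galois \'etale cover.
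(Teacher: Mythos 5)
Your proposal is correct and takes essentially the same route as the paper's first proof of \autoref{cor.MaximalQuasiEtale}: assume the conclusion fails, iteratively produce further non-\'etale quasi-\'etale covers (passing to Galois closures so that each stage remains generically Galois over the fixed base $X$), and derive a contradiction with \autoref{thm.QuasiEtaleStabilize} applied with all $X_i = X$. The extra details you supply --- the equivalence between the extension property and ``every finite quasi-\'etale cover of $\tld X$ is \'etale,'' and the verification that Galois closure preserves quasi-\'etaleness while the transition map stays non-\'etale --- are precisely the steps the paper compresses into ``which we may also assume to be generically Galois over $X$'' and its citation of \cite[Step 2 of the proof of Thm.~1.5]{GrebKebekusPeternellEtaleFundamental}.
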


We give two proofs of this result.
The first one uses \autoref{thm.QuasiEtaleStabilize}, while the second one is a direct proof which emphasizes the Galois correspondence.

\begin{proof}[First proof of \autoref{cor.MaximalQuasiEtale}]
Suppose not, then for every finite quasi-\'etale generically Galois cover $Y \to X$ there exists a further finite quasi-\'etale cover $Y' \to Y$, which we may also assume to be generically Galois over $X$, such that $Y' \to Y$ is not \'etale.
Repeating this process now with $Y'$ taking the role of $Y$, we obtain a sequence of covers contradicting the conclusion of \autoref{thm.QuasiEtaleStabilize}.

The proof of the statement about the map $\piet(\tld X_{\reg}) \to \piet(\tld X)$ is the same as~\cite[Step 2 of the proof of Thm.~1.5]{GrebKebekusPeternellEtaleFundamental}, and is thus omitted.
\end{proof}

\begin{proof} [Second proof of \autoref{cor.MaximalQuasiEtale}]
Consider the stratification $\{ Z_i \}_{i \in I}$ as in \autoref{prop.GabberStratification} where each $Z_i$ is connected.  Set $Z = X_{\sing}$.
As before, for every $Z_i$ choose $s_i$ to be a geometric generic point.
As observed in the proof of \autoref{prop.StrataAndCovers}, if $t$ is a geometric point of $Z_i$ generizing to $s_i$ (as in \autoref{prop.GabberStratification}), then $\pi_1^{\et}(T_t)$ and $\pi_1^{\et}(T_s)$ have a common image in $\pi_1^{\et}(U)$, where $U = X \setminus Z$. 
Let then $G_i \subseteq \pi_1^{\et}(U)$ denote this common image.
Note this common image is only unique up to conjugation.
In particular, since the index set $I$ is finite, there are only finitely many $G_i$ up to conjugation.

\begin{claim}
There is an open normal subgroup $H \leq \pi_1^{\et}(U)$ that intersects $G_i$ trivially for every $i$.
\end{claim}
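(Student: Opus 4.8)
The plan is to reduce the claim to a purely group-theoretic statement about the profinite group $\Pi := \piet(U)$ and then exploit the fact that profinite groups are separated by their open normal subgroups. The essential inputs are just two finiteness facts. First, each $G_i$ is \emph{finite}: by construction $G_i$ is the image in $\Pi$ of $\piet(T_{s_i})$, and $\piet(T_{s_i})$ is finite of order at most $1/\delta$, exactly as recorded in the proof of \autoref{thm.QuasiEtaleStabilize}. Second, the index set $I$ is finite, so there are only finitely many $G_i$ altogether. I would also note at the outset that, because we insist $H$ be \emph{normal}, the condition ``$H \cap G_i = \{1\}$'' is insensitive to replacing $G_i$ by a conjugate, since $H \cap gG_ig^{-1} = g(H \cap G_i)g^{-1}$ when $gHg^{-1} = H$. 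This is what lets us ignore the fact that the $G_i$ are only well-defined up to conjugation.

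Next I would invoke the structure of $\Pi$ as a profinite group: it is Hausdorff, and its open normal subgroups form a neighborhood basis of the identity, so their intersection is trivial. Concretely, set $S := \bigcup_{i \in I} (G_i \setminus \{1\})$, a finite subset of $\Pi \setminus \{1\}$ by the two finiteness facts above. For each $g \in S$ there is then an open normal subgroup $N_g \leq \Pi$ with $g \notin N_g$ (choose any open neighborhood of $1$ omitting $g$, and shrink it to an open normal subgroup, which is possible precisely because such subgroups form a neighborhood basis). Finally I would put $H := \bigcap_{g \in S} N_g$. As a finite intersection of open normal subgroups, $H$ is again open and normal, and if some $x \in H \cap G_i$ were nontrivial then $x \in S$ while $x \in H \subseteq N_x$, contradicting $x \notin N_x$; hence $H \cap G_i = \{1\}$ for every $i$, as required.

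I do not expect a genuine obstacle here: the argument is elementary once the two finiteness statements are in place. The only point that needs care is producing a subgroup that is \emph{simultaneously} open and normal, which is why I intersect open normal subgroups (rather than arbitrary open subgroups separating the finitely many nontrivial elements of the $G_i$), and the only substantive ingredient borrowed from the geometry is the uniform bound on $\bigl|\piet(T_{s_i})\bigr|$ that guarantees each $G_i$ is finite.
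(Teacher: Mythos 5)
Your proof is correct and is essentially the same argument as the paper's: both rest on the finiteness of each $G_i$ (via the bound $\bigl|\piet(T_{s_i})\bigr| \le 1/\delta$), the finiteness of $I$, and the fact that the open normal subgroups of the profinite group $\piet(U)$ have trivial intersection, with normality handling the ambiguity of conjugation. The only cosmetic difference is packaging: the paper writes $\piet(U) = \varprojlim Q_j$ and takes $H$ to be the kernel of a single projection $\piet(U) \to Q_j$ into which all $G_i$ inject, whereas you intersect finitely many open normal subgroups separating the nontrivial elements of the $G_i$ — these amount to the same thing.
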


\begin{proof}[Proof of claim]
Note that $\pi_1^{\et}(U)$ is a profinite group
\[ \pi_1^{\et}(U) = \varprojlim Q_j \]
where the $Q_j$ are quotients of $\pi_1^{\et}(U)$.
Thus each $G_i$ maps injectively to $Q_j$ for large enough $j \gg 0$.
Choose a large enough $j$ that works for all $G_i$ and let $H = \ker(\pi_1^{\et}(U) \to Q_j)$.
In particular, $H$ is an open normal subgroup that intersects each $G_i$ trivially and hence by normality, it also intersects any conjugate trivially.
This proves the claim.
\end{proof}

Returning to the proof, by the Galois correspondence, there is a Galois cover $\varrho\colon \tld U \to U$ such that $\Aut_U(\tld U) = Q_j$ and $\pi_1^{\et}\bigl(\tld U\bigr) = H \subseteq \piet(U)$. 
Let $\rho\colon \tld X \to X$ be the integral closure of $X$ in $K(\tld U)$, which is finite and its pullback to $U$ is exactly $\varrho$.
In particular, $\rho$ is quasi-\'etale and generically Galois.
In what remains, we prove that $\rho$ has the desired property. 

Let $V\to \tld U$ be a finite \'etale cover in $\textsf{F\'{E}t}\bigl(\tld U\bigr)$, we will extend it across $\tld X$. 
Take $\sigma\colon Y \to \tld X$ to be the integral closure of $\tld X$ in $K(V)$. 
It suffices to prove that $Y \to \tld X$ is \'etale.

Let $W \subseteq \tld X$ be the complement of the branch locus of $\sigma$. Let $\tld Z$ denote the inverse image of $Z$ in $\tld X$ and note that $\tld U$ is the inverse image of $U$ in $\tld X$, so that $\tld U = \tld X \setminus \tld Z$.
Now $\tld U \subseteq W$ and we want to show that $W = \tld X$. 
By \autoref{clm.EtaleIffLocallyTrivial}, a geometric point $x \in \tld X$ belongs to $W$ if and only if the pullback of $\sigma\colon Y \to \tld X$ to $\tld T_x := \Spec \O_{\tld X, x}^{\sh} \setminus \tld{Z}^{\sh}$ is trivial, where $\tld Z^{\sh}$ is the preimage of $\tld Z$.
For any geometric point $x \in \tld X$, $\tld T_x \to \tld X$ factors through $\tld U$.
Thus the pullback of $\sigma\colon Y \to \tld X$ to $\tld T_x$ coincides with the pullback of $V \to \tld U$ to $\tld T_x$.
We see that $x \in W$ if and only if the pullback of $V \to \tld U$ to $\tld T_x$ is trivial.
Hence it suffices to show that for all geometric points $x \in \tld X$ and all $V / \tld U \in \textsf{F\'{E}t}(\tld U)$, the pullback of $V \to \tld U$ to $\tld T_x$ is trivial.
Equivalently, we want the induced homomorphism of fundamental groups $\piet \bigl( \tld T_x \bigr) \to \piet \bigl( \tld U \bigr)$ to be zero~\cite[5.2.3]{MurreLecturesFundamentalGroups}. 
This is argued below.

Let $t \in X$ be the image of the geometric point $x \in \tld X$. We have a commutative diagram
\[
\xymatrix{
\tld T_x \ar[r] \ar[d] & \tld U \ar[d] \\
T_t \ar[r] & U
}
\]
with finite \'etale vertical morphisms.
By applying $\piet(-)$ we obtain the commutative square
\[
\xymatrix{
\pi_1^{\et}\bigl(\tld T_x\bigr) \ar@{^{(}->}[d] \ar[r] & H \ar@{^{(}->}[d] \\
\pi_1^{\et}(T_t) \ar[r] & \pi_1^{\et}(U)
}
\]
However, by construction $H$ meets the image of $\pi_1^{\et}(T_t)$ in $\pi_1^{\et}(U)$ trivially, which forces the top map in this square to be zero, as required.
\end{proof}


If $X$ is a strongly $F$-regular proper variety over an $F$-finite field $k$, and $U = X_{\reg}$, then it follows from \cite[Theorem 7.6]{SchwedeTuckerTestIdealFiniteMaps} that any \etale{} cover $U$ is cohomologically tame and hence tame in all the senses of \cite{KerzSchmidtOnDifferentNotionsOfTameness}.
From this we immediately obtain the following corollary.

\begin{corollary} \label{cor.MaximalTameCover}
If $X$ is a strongly $F$-regular proper variety over an $F$-finite field $k$ with $U = X_{\reg}$, then the map $\tld X \to X$ from \autoref{cor.MaximalQuasiEtale} restricts to a tame \etale{} cover of $U$.
\end{corollary}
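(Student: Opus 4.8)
The plan is to verify the two assertions packaged in the statement separately: that $\rho\colon \tld X \to X$ is genuinely \'etale over $U$, and that this restriction is tame.

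First I would record why $\rho$ restricts to an \'etale cover of $U$. By \autoref{cor.MaximalQuasiEtale}, the map $\rho$ is finite, quasi-\'etale, and generically Galois. By Purity of the Branch Locus, as recalled in \autoref{not.b.branchlocus}, the branch locus of any quasi-\'etale morphism is contained in the non-regular locus of the base; hence the branch locus of $\rho$ is contained in $X \setminus U = X_{\sing}$. Consequently $\rho$ is \'etale over every point of $U$, so that $\rho^{-1}(U) \to U$ is a finite \'etale cover of $U$.

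Next I would invoke the tameness input recorded immediately before the statement. Since $X$ is a strongly $F$-regular proper variety over the $F$-finite field $k$, the cited consequence of \cite[Theorem 7.6]{SchwedeTuckerTestIdealFiniteMaps} asserts that every \'etale cover of $U = X_{\reg}$ is cohomologically tame, and therefore tame in all the senses of \cite{KerzSchmidtOnDifferentNotionsOfTameness}. Applying this to the particular cover $\rho^{-1}(U) \to U$ produced in the previous step immediately yields that this restriction is a tame \'etale cover, which is exactly the claim.

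The main obstacle is not in the assembly but in the cited input: the substantive content is the tameness of \emph{all} \'etale covers of $U$, which rests on the behavior of test ideals and the trace under finite maps in characteristic $p$ \cite[Theorem 7.6]{SchwedeTuckerTestIdealFiniteMaps} together with the comparison of the various notions of tameness in \cite{KerzSchmidtOnDifferentNotionsOfTameness}. Granting that input, the only thing left to check is that $\rho|_U$ lies among the \'etale covers to which the input applies, and this is precisely the purity argument above.
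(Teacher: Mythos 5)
Your proposal is correct and matches the paper's own (implicit) argument: the paper derives this corollary ``immediately'' from the remark preceding it, namely that every \'etale cover of $U$ is cohomologically tame by \cite[Theorem 7.6]{SchwedeTuckerTestIdealFiniteMaps}, combined with the fact that the quasi-\'etale map $\tld X \to X$ is \'etale over $U = X_{\reg}$ by purity of the branch locus. Your two-step assembly---purity to get \'etaleness over $U$, then the cited tameness input---is exactly this reasoning, just written out explicitly.
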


We also obtain a local version of \autoref{cor.MaximalQuasiEtale}.

\begin{corollary} \textnormal{(cf.~\cite[Theorem 1.9]{GrebKebekusPeternellEtaleFundamental})} \label{cor.LocalMaximalQuasiEtale}
Suppose that $X$ is an $F$-finite Noetherian integral strongly $F$-regular scheme and that $x \in X$ is a point.
Then there exists a Zariski-open neighborhood $x \in X^\circ \subseteq X$ and a finite quasi-\'etale generically Galois cover $\rho\colon \tld X^\circ \to X^\circ$ with $\tld X^\circ$ normal and which satisfies the following property:

For every further Zariski-open neighborhood $x \in W \subseteq X^\circ$, with $\tld W = \rho^{-1}(W)$, any further finite quasi-\'etale cover $\overline{W} \to \tld W$ is \'etale.
Equivalently, $\piet(\tld W_{\reg}) \to \piet(\tld W)$ is an isomorphism.
\end{corollary}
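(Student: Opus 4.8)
The plan is to transplant the construction in the second proof of \autoref{cor.MaximalQuasiEtale} to a sufficiently small Zariski-neighborhood of $x$, chosen so that the resulting cover trivializes the relevant \emph{local} fundamental groups outright, not merely their images in the global $\piet$. First I would reproduce the $F$-signature input of \autoref{thm.QuasiEtaleStabilize}: the uniform lower bound on $F$-signature and \cite[Theorem A]{CarvajalSchwedeTuckerEtaleFundFsignature} bound $\big|\piet(\Spec \O_{X,x}^{\sh}\setminus Z^{\sh})\big|$ uniformly for $Z = X_{\sing}$, so \autoref{prop.GabberStratification} yields a finite stratification $\{Z_i\}_{i\in I}$ of $X$ into irreducible locally closed pieces. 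Since $I$ is finite, only finitely many $Z_i$ fail to contain $x$ in their closure; deleting the closures of those finitely many strata produces a Zariski-open $x\in X^\circ\subseteq X$ over which every surviving stratum contains $x$ in its closure. The decisive bookkeeping point is that for any open $x\in W\subseteq X^\circ$ a stratum meets $W$ if and only if its generic point $s_i$ lies in $W$ (an open set contains the generic point of every irreducible locally closed subset it meets); thus the generic points of the strata of $W$ form a subset of those of $X^\circ$.

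Next I would build $\rho\colon \tld X^\circ\to X^\circ$ as in the second proof of \autoref{cor.MaximalQuasiEtale}, but imposing more on the finite quotient $\piet(U^\circ)\twoheadrightarrow Q_j$ (with $U^\circ = X^\circ_{\reg}$): I require $Q_j$ so large that for every stratum generic point $s_i$ the composite $\piet(T_{s_i})\to\piet(U^\circ)\to Q_j$ is \emph{injective}, and set $H=\ker(\piet(U^\circ)\to Q_j)$, $\tld U^\circ=\rho^{-1}(U^\circ)$, so $\piet(\tld U^\circ)=H$. Pulling the stratification back through \autoref{rem.StratificationBehavesWellUnderPullback} gives a stratification $\{\tld Z_{ij}\}$ of $\tld X^\circ$ with generic points $\tilde s_{ij}$ lying over the $s_i$. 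The point of the injectivity demand is that the upstairs links become simply connected: computing the pullback cover over the link $T_{s_i}$, each component $\tld T_{\tilde s_{ij}}$ has $\piet(\tld T_{\tilde s_{ij}})=\ker\!\big(\piet(T_{s_i})\to Q_j\big)=1$. Crucially, $\tld T_{\tilde s_{ij}}=\Spec\O_{\tld X^\circ,\tilde s_{ij}}^{\sh}\setminus\tld Z^{\sh}$ depends only on the local ring at $\tilde s_{ij}$, so this vanishing is computed once and is independent of any later choice of $W$.

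With this in hand the corollary follows formally. Given $W$ and a finite quasi-\'etale $\sigma\colon\overline W\to\tld W=\rho^{-1}(W)$, let $V\to\tld U_W:=\rho^{-1}(W\cap X_{\reg})$ be the associated finite \'etale cover. Since $\sigma$ is quasi-\'etale its branch locus lies in $\tld W\cap\tld Z$, and the restricted stratification $\{\tld Z_{ij}\cap\tld W\}$ still satisfies \autoref{prop.GabberStratification} (the isomorphism of local $\piet$ across specializations within a stratum is preserved under the open restriction from $\tld X^\circ$). Hence \autoref{prop.StrataAndCovers} applies and reduces \'etaleness of $\sigma$ over all of $\tld W$ to triviality of $V$ over $\tld T_{\tilde s}$ at the generic points $\tilde s$ of the strata meeting $\tld W$, via \autoref{clm.EtaleIffLocallyTrivial}. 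But each such $\tilde s$ is one of the $\tilde s_{ij}$ (its image $s_i$ lies in $W$ by the bookkeeping fact), so $\piet(\tld T_{\tilde s})=1$ and \emph{every} finite \'etale cover of $\tld U_W$—in particular $V$—restricts trivially to $\tld T_{\tilde s}$. Therefore $\sigma$ is \'etale; and because the vanishing $\piet(\tld T_{\tilde s})=1$ is intrinsic to $\tld X^\circ$, the single pair $(X^\circ,\rho)$ works simultaneously for all $W$.

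The main obstacle is the middle step: producing $X^\circ$ for which the injectivity $\piet(T_{s_i})\hookrightarrow\piet(U^\circ)$ holds simultaneously for the finitely many stratum generic points, equivalently arranging that $\rho$ genuinely trivializes the (finite) local fundamental groups rather than only killing their global images in $H$. This is exactly where the real content lies, and it is the analogue of \cite[Theorem~1.9]{GrebKebekusPeternellEtaleFundamental}: one must realize each of the finitely many finite \'etale covers of the punctured strictly-henselian neighborhoods $T_{s_i}$ by an honest finite \'etale cover of $U^\circ$ after shrinking $X^\circ$ toward $x$—a spreading-out argument exploiting that $\O_{X,s_i}^{\sh}$ is a filtered colimit of \'etale neighborhoods, together with the fact that $x$ lies in the closure of every surviving stratum so that the shrinking does not destroy the data at the $s_i$. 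Once injectivity into $\piet(U^\circ)$ is secured, the factorization $\piet(T_{s_i})\to\piet(U_W)\to\piet(U^\circ)$ shows it persists into every smaller $\piet(U_W)$, confirming that nothing is lost upon passing to smaller $W$.
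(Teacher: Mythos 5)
Your reduction of the corollary to simple-connectedness of the upstairs links $\tld T_{\tilde s_{ij}}$ is sound, and you have correctly diagnosed why the construction in the second proof of \autoref{cor.MaximalQuasiEtale} does not transplant as is: once $W$ shrinks, covers of $\tld W$ are governed by $\piet(\tld U_W)$ (where $U_W = W \cap X_{\reg}$ and $\tld U_W = \rho^{-1}(U_W)$), which surjects onto $H = \piet(\tld U^\circ)$ with a kernel, so knowing that the images $G_i$ meet $H$ trivially controls nothing over smaller $W$. But your cure is precisely where the proposal has a genuine gap, and you say so yourself (``the main obstacle,'' ``where the real content lies''). The injectivity of $\piet(T_{s_i}) \to \piet(U^\circ) \to Q_j$ cannot be arranged by taking $Q_j$ large: the composite factors as $\piet(T_{s_i}) \twoheadrightarrow G_i \hookrightarrow \piet(U^\circ) \to Q_j$, so it is injective only if $\piet(T_{s_i}) \to \piet(U^\circ)$ is already injective. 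That statement --- every finite \'etale cover of the strictly henselian punctured neighborhood $T_{s_i}$, in particular its (finite) universal cover, is dominated by the pullback of a finite \'etale cover of the single Zariski-open set $U^\circ$ --- is exactly the hard content, and the spreading-out you appeal to does not deliver it: $\O_{X,s_i}^{\sh}$ is a filtered limit of \emph{\'etale} neighborhoods of $s_i$, not Zariski ones, so a cover of $T_{s_i}$ descends only to $X_\alpha \setminus Z$ for some \'etale $X_\alpha \to X$, and nothing in the paper's toolkit (Gabber's theorem gives local constancy along strata, not injectivity) converts that into a cover of a Zariski neighborhood of $s_i$. Shrinking $X^\circ$ toward $x$ reduces the question, in the limit, to injectivity of $\piet(T_{s_i}) \to \piet\big(\Spec \O_{X,x} \setminus (\text{inverse image of } Z)\big)$, which is equally unproven. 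So the proposal defers, rather than proves, its key step.

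For comparison, the paper's actual proof avoids all of this: it is a short contradiction argument. If no pair $(X^\circ, \rho)$ works, one iterates the failure --- taking Galois closures along the way --- to produce a tower $Y_1 \xleftarrow{\gamma_1} Y_2 \xleftarrow{\gamma_2} \cdots$ of finite quasi-\'etale generically Galois covers over a shrinking chain of opens $X_1 \supseteq X_2 \supseteq \cdots$ all containing $x$, in which no $\gamma_i$ is \'etale; this contradicts \autoref{thm.QuasiEtaleStabilize}, whose statement was formulated with the open inclusions $j_i$ exactly to make this bootstrap legal. If you want a direct construction in the style of yours, the paper's remark following the corollary indicates the correct repair: replace $\piet(U^\circ)$ by $P := \piet\big(\Spec \O_{X,x} \setminus (\text{inverse image of } Z)\big)$, which governs covers of \emph{all} the $\tld U_W$ at once, since $\Spec \O_{X,x}$ is the limit of the opens $W \ni x$. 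For that group the weak condition suffices: choose an open normal $H \le P$ meeting the finite images of the $\piet(T_{s_i})$ trivially (possible exactly as in \autoref{cor.MaximalQuasiEtale}, using that $x$ lies in the closure of every surviving stratum so that each $T_{s_i}$ maps to $\Spec \O_{X,x}$), spread the corresponding cover out to a finite \'etale cover of $U^\circ$ for some Zariski-open $X^\circ \ni x$, and normalize to get $\rho$. Then for any $W \ni x$, any geometric generic point $\tilde s$ of a stratum of $\tld W$, and any finite \'etale cover of $\tld U_W$, the map $\tld T_{\tilde s} \to \tld U_W$ factors through the connected cover $V$ of $\Spec \O_{X,x} \setminus (\text{inverse image of } Z)$ with $\piet(V) = H$, while the image of $\piet(\tld T_{\tilde s})$ in $P$ lies in a conjugate of $G_i \cap H = \{1\}$; hence the pullback is trivial, and \autoref{clm.EtaleIffLocallyTrivial} with \autoref{prop.StrataAndCovers} finishes --- no injectivity ever being needed.
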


\begin{proof}
Suppose not, \ie~assume that for every $x \in X^\circ \subseteq X$ and every finite quasi-\'etale generically Galois $\tld{X^\circ} \xrightarrow\rho X^\circ$, there is $x \in W \subseteq X^\circ$ and a finite quasi-\'etale $\overline W \to \rho^{-1}(W)$ that is not \'etale.
By taking Galois closure, we may assume that $\overline W \to W$ is Galois.

Apply this assumption with $X^\circ := X$ and $\rho := \id_X$.
We obtain a map $\overline W \to W$, which we denote $\eta_1\colon Y_1 \to X_1$.
Applying the assumption again, this time to $\eta_1$, we get a map $\eta_2\colon Y_2 \to X_2$ together with maps $\gamma_1\colon Y_2 \to Y_1$ and $X_2 \hookrightarrow X_1$.
Inductively, we construct a diagram as in \autoref{thm.QuasiEtaleStabilize}, but where none of the $\gamma_i$ are \'etale.
This contradicts \autoref{thm.QuasiEtaleStabilize}.

\end{proof}

\begin{remark}
Note that one cannot always take $X^{\circ} = X$ in the statement of \autoref{cor.LocalMaximalQuasiEtale}, even in characteristic zero.
Indeed let $X$ be the \emph{projective} quadric cone $V(x^2 - yz) \subseteq \bP^3_{\bC}$.
Then $X_{\reg}$ is simply connected (it is an $\mathbb A^1$-bundle over $\bP^1_{\bC}$), so every finite quasi-\'etale cover of $X$ is trivial.
On the other hand, the \emph{affine} quadric cone $U \subseteq X$ does have a finite quasi-\'etale cover that is not \'etale, corresponding to $k[x^2, xy, y^2] \subseteq k[x,y]$.

In characteristic 2, $\pi_1^{\et}$ of the punctured quadric cone singularity is trivial and hence this computation does not work (the local cover above is in fact inseparable).
However, one can obtain the same conclusion over an algebraically closed field of characteristic $p > 2$.
Let $X' \to X$ be a finite quasi-\'etale generically Galois cover of $X$ that is not \'etale.
Let $O \in X$ be the cone point.
It follows that
\[ X' \times_X \Spec \O_{X, O}^{\sh} \]
is a disjoint union of copies of $V \to \O_{X,O}^{\sh}$ where $V$ is the regular 2-to-1 cover of $\Spec \O_{X, O}^{\sh}$ (corresponding to $k[x^2, xy, y^2] \subseteq k[x,y]$).

Let $L$ be a ruling of the cone and consider $X' \times_X L \to L$.  This map is \etale{} except over the cone point $O \in L \subseteq X$, and over that point $X' \times_X L$ is not even reduced.  However, if one takes $L' = (X' \times_X L)_{\red}$ then the computation $k\llbracket x^2, xy, y^2 \rrbracket/\langle x^2, xy \rangle \subseteq k\llbracket x,y \rrbracket /\langle x^2, xy \rangle$ shows that $L'$ is normal and furthermore that $L' \to L$ is ramified of order 2 over $O$ on each connected component.  In particular, $L' \to L$ is ramified over a single point and that ramification is tame (since $p > 2$), but $L \cong \bP^1$ and so this is a contradiction.
Hence any quasi-\etale{} cover of $X$ is in fact \etale{} and so we cannot take $X^{\circ} = X$ in \autoref{cor.LocalMaximalQuasiEtale} just as in characteristic zero.
\end{remark}

\begin{remark}
We believe one can prove \autoref{cor.LocalMaximalQuasiEtale} using a strategy similar to \autoref{cor.MaximalQuasiEtale}.  In particular, we can first assume that $x$ is in the closure of any stratum $Z_i$ (if not, shrink $X$ to remove those strata).  Then use
\[
\pi_1^{\et}\big((\Spec \O_{X,x}) \setminus \text{(inverse image of $Z$)}\big)
\]
as the replacement for $\pi_1^{\et}(U)$ in the proof of \autoref{cor.LocalMaximalQuasiEtale}.  The $H$ we obtain produces a cover that is quasi-\'etale over a neighborhood $X^\circ$ of $x$ and which satisfies the desired property.
\end{remark}

Just as in \cite[Theorem 1.10]{GrebKebekusPeternellEtaleFundamental}, we obtain a result on simultaneous index-one covers.

\begin{corollary} \label{cor.SimultaneousIndex1Cover}
Suppose that $x \in X$ and $\tld X^\circ \twoheadrightarrow X^\circ \subseteq X$ is as in \autoref{cor.LocalMaximalQuasiEtale}.
Suppose further that $X^\circ$ is chosen to be affine (or quasi-projective over an affine scheme).
\begin{itemize}
\item[(a)]  If $\tld D$ is any $\bZ_{(p)}$-Cartier\footnote{This just means it is $\bQ$-Cartier with index not divisible by $p$.} divisor on $\tld X^\circ$, then $\tld D$ is Cartier.
\item[(b)]  There exists an integer $N > 0$ so that if $D$ is any $\bZ_{(p)}$-Cartier divisor on $X^\circ$, then $N \cdot D$ is Cartier.
\end{itemize}
\end{corollary}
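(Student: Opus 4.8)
The plan is to deduce both statements from the maximal quasi-\'etaleness of $\tld X^\circ$. Applying \autoref{cor.LocalMaximalQuasiEtale} with $W = X^\circ$ (so that $\tld W = \rho^{-1}(X^\circ) = \tld X^\circ$), every finite quasi-\'etale cover of $\tld X^\circ$ is in fact \'etale; this is the one substantial input I will use, alongside the standard cyclic (index-one) cover construction and faithfully flat descent of invertibility.

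For (a), recall that Cartier-ness of $\tld D$ is equivalent to invertibility of the reflexive sheaf $\O_{\tld X^\circ}(\tld D)$, a property that may be checked fppf-locally on the base and hence at each point after passing to the strict henselization, where $n\tld D$ becomes principal ($n$ denoting the index of $\tld D$, which is prime to $p$). I would then form the degree-$n$ cyclic index-one cover $\sigma\colon Y = \mathbf{Spec}_{\tld X^\circ}\bigl(\bigoplus_{i=0}^{n-1}\O_{\tld X^\circ}(i\tld D)\bigr) \to \tld X^\circ$. Since $p \nmid n$, this cover is \'etale in codimension one—equivalently finite quasi-\'etale, as $Y$ is normal and the branch locus sits in codimension $\ge 2$—and $\sigma^{*}\tld D$ is Cartier on $Y$ by construction. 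Invoking \autoref{cor.LocalMaximalQuasiEtale} forces $\sigma$ to be \'etale, hence faithfully flat; faithfully flat descent of invertibility then yields that $\O_{\tld X^\circ}(\tld D)$ is itself invertible, i.e.\ $\tld D$ is Cartier.

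For (b), set $N := \deg\rho$ and let $G$ be the generic Galois group of $\rho$, so $\lvert G\rvert = N$. Given a $\bZ_{(p)}$-Cartier divisor $D$ on $X^\circ$, its pullback $\rho^{*}D$ is again $\bZ_{(p)}$-Cartier (its index divides that of $D$ and so stays prime to $p$), whence by part (a) it is Cartier on $\tld X^\circ$. To descend this, I would use that $\rho$, being quasi-\'etale, is finite flat in codimension one, so the norm (transfer) on divisor class groups is defined and the composite $\rho_{*}\rho^{*}$ is multiplication by the local degree—dividing $N$—on the local class groups $\operatorname{Cl}(\O^{\sh}_{X^\circ, y})$. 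Applying this to $[D]$ and using $\rho^{*}[D] = 0$ gives $N\cdot[D] = 0$ at every $y \in X^\circ$, i.e.\ $N\cdot D$ is Cartier; as $N = \deg\rho$ is independent of $D$, this is the desired uniform integer.

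I expect the main obstacle to lie in part (a), in the passage from the \emph{global} hypothesis ``every quasi-\'etale cover of $\tld X^\circ$ is \'etale'' to the cyclic cover, which only exists once $\O(n\tld D)$ has been trivialized and hence is genuinely available only after localizing. Realizing the resulting local cover as (a restriction of) an honest finite quasi-\'etale cover to which \autoref{cor.LocalMaximalQuasiEtale} applies is the delicate point: it is here that the flexibility of that corollary over all neighborhoods $\rho^{-1}(W)$ of $x$, together with the reduction—via the stratification of \autoref{prop.GabberStratification} and \autoref{rem.StratificationBehavesWellUnderPullback}—to points lying over $x$, will be needed. In part (b) the analogous technical point is verifying that the norm relation descends Cartier-ness through the codimension-$\ge 2$ branch locus, which I would handle by working with the strictly henselian local rings at each $y$ and the decomposition groups of $G$.
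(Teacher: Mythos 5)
Your skeleton for (a) --- form the degree-$n$ cyclic cover, note it is quasi-\'etale since $p \nmid n$, invoke \autoref{cor.LocalMaximalQuasiEtale} to force it to be \'etale, and descend invertibility along the resulting faithfully flat map --- is exactly the skeleton of the proof the paper defers to, namely that of \cite[Theorem 1.10]{GrebKebekusPeternellEtaleFundamental}. But the step you flag at the end and leave open is a genuine gap, not a technicality. The sheaf $\bigoplus_{i=0}^{n-1}\O_{\tld X^\circ}(i\tld D)$ carries no $\O_{\tld X^\circ}$-algebra structure until one fixes an isomorphism $\O_{\tld X^\circ}(n\tld D)\cong\O_{\tld X^\circ}$, and no such global trivialization need exist; so the cover $\sigma$ to which you apply \autoref{cor.LocalMaximalQuasiEtale} does not exist as written. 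Your fallback of passing to strict henselizations does produce a cyclic cover, but then the corollary no longer applies: its hypothesis concerns finite quasi-\'etale covers of $\rho^{-1}(W)$ for a Zariski-open $W$ containing the distinguished point $x$, and a cover of $\Spec \O^{\sh}_{\tld X^\circ,\tld y}$ neither descends to such a $\tld W$ (limit arguments only yield an \'etale neighborhood of $\tld y$) nor extends to one without possibly acquiring ramification in codimension one. Finally, the repair you propose --- reducing to points lying over $x$ via \autoref{prop.GabberStratification} and \autoref{rem.StratificationBehavesWellUnderPullback} --- cannot work in principle: statement (a) concerns every point of $\tld X^\circ$, and Cartier-ness at the points over $x$ says nothing about the remaining points.

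The missing idea is precisely the affineness hypothesis, which your proposal never invokes and whose role the paper spells out: on a scheme affine (or quasi-projective over an affine), any line bundle is trivial on a Zariski neighborhood of any finite set of points. Given an arbitrary $\tld y \in \tld X^\circ$ with image $y = \rho(\tld y)$, trivialize $\O_{\tld X^\circ}(n\tld D)$ on an open $\tld V$ containing the finite set $\rho^{-1}(x)\cup\rho^{-1}(y)$; since $\rho$ is finite, hence closed, the set $W := X^\circ\setminus\rho(\tld X^\circ\setminus\tld V)$ is open, contains both $x$ and $y$, and satisfies $\tld W := \rho^{-1}(W)\subseteq \tld V$. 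Now the cyclic cover of $\tld W$ exists, is finite quasi-\'etale as $p\nmid n$, and because $x \in W$ the defining property of \autoref{cor.LocalMaximalQuasiEtale} makes it \'etale; flat descent gives Cartier-ness of $\tld D$ on $\tld W\ni\tld y$, and (a) follows since $\tld y$ was arbitrary and Cartier-ness is local. Your part (b), by contrast, is essentially correct and close to what \cite{GrebKebekusPeternellEtaleFundamental} do: granted (a), $\rho^*D$ is Cartier, the norm of a Cartier divisor along the finite surjective $\rho$ is again Cartier (trivialize the invertible sheaf on the semi-local scheme $\rho^{-1}(\Spec\O_{X^\circ,y})$ and take norms of local equations --- finiteness plus normality is what makes this work, not ``flatness in codimension one''), and $\rho_*\rho^*D=(\deg\rho)\cdot D$, so $N=\deg\rho$ suffices; also note that your claim that the local degrees divide $N$ uses that $\rho$ is generically Galois. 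But (b) is conditional on (a), so the gap above is the substantive one.
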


\begin{proof}
The proof is exactly the same as the proof of \cite[Theorem 1.10]{GrebKebekusPeternellEtaleFundamental}, and so we omit it.
Of course, the main idea is to take a cyclic cover.
The hypothesis that $X^\circ$ is affine (or more generally quasi-projective over an affine) is assumed so that for any finite collection of points $S \subseteq X^\circ$ and for any line bundle $\sL$ on $X$, that there exists an open neighborhood of $S$ which trivializes $\sL$.
\end{proof}

Just as in \cite{GrebKebekusPeternellEtaleFundamental}, \autoref{cor.SimultaneousIndex1Cover} implies that for any $F$-finite Noetherian integral strongly $F$-regular scheme, there exists an integer $N > 0$ such that if $D$ is $\bZ_{(p)}$-Cartier, then $N \cdot D$ is Cartier.
The point is that by quasi-compactness, $X$ can be covered by finitely many open $X^\circ$ satisfying \autoref{cor.SimultaneousIndex1Cover}.


\bibliographystyle{skalpha}
\bibliography{MainBib}

\def\cfudot#1{\ifmmode\setbox7\hbox{$\accent"5E#1$}\else
  \setbox7\hbox{\accent"5E#1}\penalty 10000\relax\fi\raise 1\ht7
  \hbox{\raise.1ex\hbox to 1\wd7{\hss.\hss}}\penalty 10000 \hskip-1\wd7\penalty
  10000\box7}
\providecommand{\bysame}{\leavevmode\hbox to3em{\hrulefill}\thinspace}
\providecommand{\MR}{\relax\ifhmode\unskip\space\fi MR}
\providecommand{\MRhref}[2]{%
  \href{http://www.ams.org/mathscinet-getitem?mr=#1}{#2}
}
\providecommand{\href}[2]{#2}
\begin{thebibliography}{DSPY16}

\bibitem[Tra14]{TravauxdeGabber}
\emph{Travaux de {G}abber sur l'uniformisation locale et la cohomologie \'etale
  des sch\'emas quasi-excellents}, S{\'e}minaire {\`a} l'{\'E}cole
  Polytechnique 2006--2008. [Seminar of the Polytechnic School 2006--2008],
  With the collaboration of Fr{\'e}d{\'e}ric D{\'e}glise, Alban Moreau, Vincent
  Pilloni, Michel Raynaud, Jo{\"e}l Riou, Beno{\^{\i}}t Stroh, Michael Temkin
  and Weizhe Zheng, Ast{\'e}risque No. 363-364 (2014) (2014), 2014.
  {\sf\scriptsize 3309086}

\bibitem[CST16]{CarvajalSchwedeTuckerEtaleFundFsignature}
{\sc J.~{Carvajal-Rojas}, K.~Schwede, and K.~Tucker}: \emph{Fundamental groups
  of {$F$}-regular singularities via {$F$}-signature}, arXiv:1606.04088.

\bibitem[DSPY16]{deStefaniPolstraYao-Globalizing}
{\sc A.~De~Stefani, T.~Polstra, and Y.~Yao}: \emph{Globalizing {F}-invariants},
  arXiv:1608.08580.

\bibitem[GW10]{GortzWedhornAlgebraicGeoemtryI}
{\sc U.~G\"ortz and T.~Wedhorn}: \emph{{Algebraic geometry I. Schemes. With
  examples and exercises.}}, {Advanced Lectures in Mathematics. Wiesbaden:
  Vieweg+Teubner. vii, 615~p. EUR~59.95 }, 2010 (English).

\bibitem[GKP13]{GrebKebekusPeternellEtaleFundamental}
{\sc D.~{Greb}, S.~{Kebekus}, and T.~{Peternell}}: \emph{{\'{E}tale fundamental
  groups of Kawamata log terminal spaces, flat sheaves, and quotients of
  Abelian varieties}}, To appear in Duke Mathematical Journal, arXiv:1307.5718.

\bibitem[HW02]{HaraWatanabeFRegFPure}
{\sc N.~Hara and K.-I. Watanabe}: \emph{F-regular and {F}-pure rings vs. log
  terminal and log canonical singularities}, J. Algebraic Geom. \textbf{11}
  (2002), no.~2, 363--392. {\sf\scriptsize MR1874118 (2002k:13009)}

\bibitem[Har94]{HartshorneGeneralizedDivisorsOnGorensteinSchemes}
{\sc R.~Hartshorne}: \emph{Generalized divisors on {G}orenstein schemes},
  Proceedings of Conference on Algebraic Geometry and Ring Theory in honor of
  Michael Artin, Part III (Antwerp, 1992), vol.~8, 1994, pp.~287--339.
  {\sf\scriptsize MR1291023 (95k:14008)}

\bibitem[KS10]{KerzSchmidtOnDifferentNotionsOfTameness}
{\sc M.~Kerz and A.~Schmidt}: \emph{On different notions of tameness in
  arithmetic geometry}, Math. Ann. \textbf{346} (2010), no.~3, 641--668.
  {\sf\scriptsize MR2578565}

\bibitem[Mil80]{MilneEtaleCohomology}
{\sc J.~S. Milne}: \emph{\'{E}tale cohomology}, Princeton Mathematical Series,
  vol.~33, Princeton University Press, Princeton, N.J., 1980. {\sf\scriptsize
  559531}

\bibitem[Mil13]{MilneLecturesOnEtaleCohomology}
{\sc J.~S. Milne}: \emph{Lectures on \'{E}tale cohomology}, 2013,
  http://www.jmilne.org/math/CourseNotes/LEC.pdf.

\bibitem[Mur67]{MurreLecturesFundamentalGroups}
{\sc J.~P. Murre}: \emph{Lectures on an introduction to {G}rothendieck's theory
  of the fundamental group}, Tata Institute of Fundamental Research, Bombay,
  1967, Notes by S. Anantharaman, Tata Institute of Fundamental Research
  Lectures on Mathematics, No 40. {\sf\scriptsize 0302650}

\bibitem[Pol15]{PolstraSemicontinuityFsignature}
{\sc T.~Polstra}: \emph{Uniform bounds in {$F$}-finite rings and lower
  semi-continuity of the {$F$}-signature}, arXiv:1506.01073.

\bibitem[PT16]{TuckerPolstraUniformApproach}
{\sc T.~Polstra and K.~Tucker}: \emph{{$F$}-signature and hilbert-kunz
  multipicity: a combined approach and comparison}, arXiv:1608.02678.

\bibitem[ST14]{SchwedeTuckerTestIdealFiniteMaps}
{\sc K.~Schwede and K.~Tucker}: \emph{On the behavior of test ideals under
  finite morphisms}, J. Algebraic Geom. \textbf{23} (2014), no.~3, 399--443.
  {\sf\scriptsize 3205587}

\bibitem[Smi16]{SmirnovUpperSemicontinuity}
{\sc I.~Smirnov}: \emph{Upper semi-continuity of the {H}ilbert-{K}unz
  multiplicity}, Compos. Math. \textbf{152} (2016), no.~3, 477--488.
  {\sf\scriptsize 3477638}

\bibitem[{Sta}16]{stacks-project}
{\sc T.~{Stacks Project Authors}}: \emph{\itshape stacks project}, 2016.

\bibitem[Xu14]{XuFinitenessOfFundGroups}
{\sc C.~Xu}: \emph{Finiteness of algebraic fundamental groups}, Compos. Math.
  \textbf{150} (2014), no.~3, 409--414. {\sf\scriptsize 3187625}

\bibitem[Yao06]{YaoObservationsAboutTheFSignature}
{\sc Y.~Yao}: \emph{Observations on the {$F$}-signature of local rings of
  characteristic {$p$}}, J. Algebra \textbf{299} (2006), no.~1, 198--218.
  {\sf\scriptsize MR2225772 (2007k:13007)}

\end{thebibliography}

\end{document}